\documentclass{article}
\usepackage{amssymb,amsmath,amsthm}
\usepackage{graphicx}
\usepackage[T1]{fontenc}
\usepackage{yfonts}

\newtheorem{theorem}{Theorem}

\newtheorem{example}{Example}

\def\bE{{\mathbb E}}

\def\bR{{\mathbb R}}
\def\Eq{{\bE}^q}
\def\Rq{{\bR}^q}

\def\bT{{\mathbb T}}

\def\cF{{\mathcal F}}

\def\cH{{\mathcal H}}

\def\Rt{{\bR^2}}

\def\span{\mathop{span}}

\def\Imm{\mathop{Imm}}
\def\Free{\mathop{Free}}

\bibliographystyle{amsalpha}

%\title{A note on $\cH$-free maps}
\title{Partial immersions and partially free maps}

\author{Roberto De Leo
\thanks{Dipartimento di Matematica e Informatica, Universit\`a di Cagliari, 
Via Ospedale 72, 09124 Cagliari, Italy $\langle$deleo@unica.it$\rangle$.}
\thanks{INFN, Sezione di Cagliari, Complesso Universitario di Monserrato, 
S.P. per Sestu Km 0.700, 09042 Monserrato (Cagliari),
Italy $\langle$roberto.deleo@ca.infn.it$\rangle$.}}

\begin{document}
\maketitle

{\sl Math Subject Classification:} Primary: 58A30, 58J60, 53C12, 53C20, 37J35, 53D17

{\sl Keywords:} Partially isometric immersions, Partially Free maps, Distributions, 
Completely Integrable Systems, Poisson Manifolds

\begin{abstract}
  In a recent paper~\cite{DDL10} we studied basic properties of partial immersions and
  partially free maps, a generalization of free maps introduced first by Gromov in~\cite{Gro70}.
  In this short note we show how to build partially free maps out of partial immersions
  and use this fact to prove that the partially free maps in critical dimension 
  introduced in Theorems 1.1-1.3 of~\cite{DDL10} for three important types of distributions
  can actually be built out of partial immersions.
  Finally, we show that the canonical contact structure on $\bR^{2n+1}$ admits 
  partial immersions in critical dimension for every $n$.
\end{abstract}
\section{Introduction}
In a joint paper with D'Ambra and Loi~\cite{DDL10}, in analogy with the 
theory of $C^k$, $k\geq3$, isometric immersions of Nash and Gromov~\cite{Nas56,Gro86},
we studied basic properties of {\em partial} isometric immersions 
(also called {\em $\cH$-immersions}), namely $C^1$ maps of a manifold $M$ into the
Euclidean space $\Rq$
%a Riemannian  manifold $N$ 
which induce a metric on some vector subbundle $\cH$ of $M$.
%, in analogy with the theory of $C^k$, $k\geq3$, isometric immersions 
%of Nash and Gromov~\cite{Nas56,Gro86}.
In particular we proved (see Theorems 1.1-1.3 in~\cite{DDL10}), 
by an explicit construction, the existence of $\cH$-free 
maps, the analog of free maps for partial isometric immersions, in critical dimension 
for three types of distributions: 1-dimensional planar distributions which are either 
Hamiltonian or of finite type; $n$-dimensional Lagrangian distributions of completely 
integrable systems in a $2n$-dimensional symplectic manifold; 1-dimensional Hamiltonian 
distributions in a Riemann-Poisson manifold.

In this note we show that $\cH$-free maps can be canonically built out of a 
$\cH$-immersion. Accordingly, we show that Theorems 1.1-1,3 of~\cite{DDL10} ultimately
depend on the fact that those distributions admit a $\cH$-immersion in critical dimension.
Moreover, we add to the list the canonical contact distributions in $\bR^{2n+1}$.
\section{$\cH$-immersions and $\cH$-free maps}
Let $M$ be a smooth $m$-dimensional manifold, $\cH$ a $k$-dimensional distribution 
on $M$, namely a vector subbundle of $TM$ such that $\dim\cH_x=k$ for all $x\in M$, 
and $\bE^q$ the $q$-dimensional Euclidean space, namely the linear space $\Rq$ endowed 
with the euclidean metric $e_q=\delta_{ij}dy^idy^j$, where $(y^i)$, $i=1,\dots,q$, are 
linear coordinates on $\Rq$.
Recall that, locally, $\cH$ is the span of $k$ vector fields $\{\xi_1,\dots,\xi_k\}$;
the $\{\xi_a\}_{a=1,\dots,k}$, are called a {\em local trivialization} of $\cH$.
We say that a $C^1$ map $f=(f^i):M\to\Eq$ is a $\cH$-immersion when the $k$
vectors $\{L_{\xi_a}f^i\,\partial_i\}_{a=1,\dots,k}$ are linearly independent 
(equivalently, when the $k\times q$ matrix 
$$
D_1(f)=\left(L_{\xi_a}f^i\right)
$$ 
is full-rank) at every point 
and for every local trivialization. Similarly, we say that a $C^2$ map $f=(f^i):M\to\Eq$ is 
$\cH$-free if the $k+s_k$ vectors $\{L_{\xi_a}f^i\,\partial_i,\{L_{\xi_a},L_{\xi_b}\}f^i\,\partial_i)$ 
are linearly independent (equivalently, when the $(k+s_k)\times q$ matrix 
$$
D_2(f)=\begin{pmatrix}L_{\xi_a}f^i\cr\{L_{\xi_a},L_{\xi_b}\}f^i\cr\end{pmatrix}
$$ 
is full-rank) at every point of every local trivialization, 
where $s_k=k(k+1)/2$ and $\{L_{\xi_a},L_{\xi_b}\}=L_{\xi_a}L_{\xi_b}+L_{\xi_b}L_{\xi_a}$ is 
the anticommutator of $L_{\xi_a}$ and $L_{\xi_b}$ (the definition of $\cH$-free map
was first introduced by Gromov in~\cite{Gro70}). Clearly $TM$-immersions
are the usual immersions and $TM$-free maps are the usual free maps.

We denote by $\Imm_\cH(M,\Rq)$ and $\Free_\cH(M,\Rq)$ the sets of $\cH$-immersions
and $\cH$-free maps $M\to\bE^q$ and endow $C^\infty(M,\Rq)$ with the strong 
Whitney topology. Both $\Imm_\cH(M,\Rq)$ and $\Free_\cH(M,\Rq)$ are open
subsets of $C^\infty(M,\Rq)$ and are clearly empty for, respectively,
$q<k$ and $q<k+s_k$ (we say that $k$ and $k+s_k$ are
{\em critical dimensions} for, respectively, $\cH$-immersions and
$\cH$-free maps). Next theorem shows that, independently on the
topology of $\cH$ and of $M$, both sets are non-empty if $q$ is big enough:
\begin{theorem}[DDL, 2010]
%  A necessary condition for the existence 
  The sets $\Imm_\cH(M,\Rq)$ and $\Free_\cH(M,\Rq)$ are dense
  in $C^\infty(M,\Rq)$ for, respectively, $q\geq m+k$ and $q\geq m+k+s_k$.
\end{theorem}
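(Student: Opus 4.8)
The plan is to derive both density statements from Thom's jet transversality theorem, combined with the standard codimension count for the locus of rank-deficient matrices. I would treat the $\cH$-immersion case in full and then indicate the (structurally identical) modifications for the $\cH$-free case, replacing $1$-jets by $2$-jets and $k$ by $k+s_k$.

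First I would note that the rank of $D_1(f)$ at a point $x$ equals $\dim df(\cH_x)$ and is therefore independent of the chosen local trivialization; hence the $\cH$-immersion condition is the intrinsic requirement that the bundle morphism $df|_\cH\colon\cH\to\underline{\Rq}$ (with $\underline{\Rq}$ the trivial rank-$q$ bundle) be fibrewise injective. Restricting the derivative part of a $1$-jet to $\cH$ defines a fibrewise-surjective bundle map, hence a submersion, $\pi\colon J^1(M,\Rq)\to\mathrm{Hom}(\cH,\underline{\Rq})$. Inside each fibre $\mathrm{Hom}(\cH_x,\Rq)\cong\mathrm{Mat}_{k\times q}$ the bad set $B=\{\,\mathrm{rank}\le k-1\,\}$ is a determinantal variety, stratified by rank, whose strata have codimensions $(k-r)(q-r)$ for $r<k$; the smallest of these, attained at $r=k-1$, is $q-k+1$. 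Since $\pi$ is a submersion, the preimage $\Sigma=\pi^{-1}(B)\subset J^1(M,\Rq)$ is stratified by submanifolds of the same codimensions, the minimal one being $q-k+1$.

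Next I would apply Thom's transversality theorem to each rank stratum of $\Sigma$: the set of $f\in C^\infty(M,\Rq)$ whose $1$-jet extension $j^1f$ is transverse to every stratum is residual, hence dense, in the strong Whitney topology (a finite intersection of residual sets over the finitely many strata). Because $q\ge m+k$ forces $\dim M=m< q-k+1=\mathrm{codim}\,\Sigma$, transversality of $j^1f$ to each stratum is equivalent to $j^1f$ avoiding it, so $j^1f$ misses $\Sigma$ altogether. Every such $f$ is then a $\cH$-immersion, which shows that $\Imm_\cH(M,\Rq)$ contains a dense set and is therefore dense (its openness being already known).

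For the $\cH$-free case the same scheme applies with $D_2(f)$ and $2$-jets, the relevant determinantal codimension now being $q-(k+s_k)+1$, which exceeds $m$ precisely when $q\ge m+k+s_k$. The step I expect to be the crux is checking that the evaluation map $J^2(M,\Rq)_x\to\mathrm{Mat}_{(k+s_k)\times q}$, $\sigma\mapsto D_2$, is surjective, so that this codimension is genuinely preserved under pullback. This reduces to a symbol computation: expanding $\{L_{\xi_a},L_{\xi_b}\}f$ shows its second-order part is the symmetric product $\xi_a\odot\xi_b$ paired with the Hessian of $f$, plus terms linear in the first derivatives. Since $\xi_1,\dots,\xi_k$ are linearly independent, the $s_k$ tensors $\{\xi_a\odot\xi_b\}_{a\le b}$ are linearly independent in $\mathrm{Sym}^2$; thus the free first derivatives can be chosen to fix the $k$ first-order rows, and the Hessian can then realize arbitrary values on the $s_k$ second-order rows while absorbing the lower-order contributions. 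With surjectivity established, the transversality argument concludes exactly as before that $\Free_\cH(M,\Rq)$ is dense.
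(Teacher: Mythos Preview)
The present paper does not actually prove this theorem: it is quoted, with attribution, from the earlier paper~\cite{DDL10} and no argument is given here. So there is nothing in this paper to compare your proof against.

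That said, your transversality argument is correct and is the standard route to such density statements. The codimension count for the rank-$r$ determinantal stratum in $\mathrm{Mat}_{k\times q}$ is $(k-r)(q-r)$, with minimum $q-k+1$ at $r=k-1$, and $q\ge m+k$ is exactly the inequality that makes this exceed $m=\dim M$, forcing a transverse $j^1f$ to miss the bad locus entirely. Your handling of the $\cH$-free case is also sound: the crux, as you identify, is the surjectivity of the symbol map $J^2(M,\Rq)_x\to\mathrm{Mat}_{(k+s_k)\times q}$, and your observation that the second-order symbol of $\{L_{\xi_a},L_{\xi_b}\}$ is $2\,\xi_a\odot\xi_b$, together with the linear independence of these $s_k$ symmetric tensors when the $\xi_a$ are independent, is precisely what is needed. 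One cosmetic point: rather than appealing to a global bundle $\mathrm{Hom}(\cH,\underline{\Rq})$ you could equally well work chart-by-chart with a fixed local trivialization, since the rank of $D_1(f)$ and $D_2(f)$ is trivialization-independent and Thom transversality is a local statement; either formulation is fine.
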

What happens in general in the range $k\leq q<m+k$ for $\cH$-immersions 
and $k+s_k\leq q<m+k+s_k$ for $\cH$-free maps is still an open question. 
When $\cH=TM$ it is known that the h-principle holds for immersions 
(resp. free maps) for 
$q\geq n$ (resp. $q\geq m+s_m$) when $M$ is open and for $q>m$ (resp. 
$q>m+s_m$) when $M$ contains a closed component (see~\cite{EG71} 
and~\cite{Gro86}). This means that, under those conditions, free maps
arise whenever the whenever the appropriate topological obstructions vanish.
\begin{example}
  The set $\Free(\bR^m,\bR^{m+s_m})$ is non empty. A concrete element of
  that set is the polynomial map
  $$
  F_m(x^1,\dots,x^m) = (x^1,\dots,x^m,(x^1)^2,x^1x^2,\dots,(x^m)^2)
  $$
  of all possible monic monomials of first and second degree in the 
  coordinates.
\end{example}
%
% and, similarly,holds for free maps ; 
The critical dimension case for immersions is trivial since no compact 
$m$-manifold can be immersed into $\bR^m$. The question of the existence 
of free maps in critical dimension on compact sets is instead of particular 
interest and still open. For example, it is still unknown whether the tori
$\bT^m$, $m>1$, admit free maps in critical dimension (see~\cite{Gro86}, Section 1.1.4). 
%In this note we are interested in $\cH$-immersions in critical dimension.

When $\cH\neq TM$ %is a proper distribution 
the theory is richer since $\cH$-immersions can arise also in
critical dimension, as next example shows:
\begin{example}
  \label{ex:Free}
  Let $\xi$ be a vector field without zeros on a Riemannian manifold
  $(M,g)$ and $\cH=\span\{\xi\}$. Assume that the 1-form $\xi^\flat$, 
  obtained by ``raising the index'' of $\xi$, is {\em intrinsically exact}, namely that
  $\xi^\flat=\lambda df$ for some smooth functions $f$ and $\lambda>0$. 
  Then 
%  $\xi$ is the gradient of some regular function $f$ and 
  $L_\xi f= \|\xi\|_g^2/\lambda>0$, so that $f\in\Imm_\cH(M,\bR)$. 
  For example consider 
  $$
  \xi(x,y)=y(1-y^2)\partial_x+(1-3y^2)\partial_y
  $$ 
  in $\Rt$. Then $\xi^\flat=e^{-x}d(y(1-y^2)e^x)$ and therefore 
  $$
  L_\xi(y(1-y^2)e^x) = y^2(1-y^2)^2 + (1-3y^2)^2 > 0\,,
  $$
  namely $y(1-y^2)e^x\in\Imm_{\cH}(\Rt,\bR)$. Observe that $\xi$
  is not topologically conjugate to a constant vector field so
  that, in principle, the solvability of $L_\xi f>0$ is not 
  a trivial matter.
\end{example}
Moreover, next theorem shows that $\cH$-immersions can be used to
build $\cH$-free maps:
%  how to build $\cH$-free maps out of :
%
\begin{theorem}
  \label{thm:main}
  Let $F\in \Free(\Rq,\bR^{q'})$ and $f\in\Imm_\cH(M,\Rq)$. 
  Then $F\circ f\in\Free_\cH(M,\bR^{q'})$.
%  $\Free(\Rq,\bR^{q'})\circ\Imm_\cH(M,\Rq)\subset\Free_\cH(M,\bR^{q'})$.
  In particular, if $\cH$
  admits $\cH$-immersions in critical dimension, then $\cH$ admits $\cH$-free
  maps in critical dimension.
\end{theorem}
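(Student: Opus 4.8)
The plan is to compute $D_2(F\circ f)$ explicitly via the chain rule and to show its rows are linearly independent at every point and for every local trivialization, using the freeness of $F$ to decouple the first- and second-order data and the $\cH$-immersion property of $f$ to finish. Write $g=F\circ f$, fix a local trivialization $\{\xi_a\}_{a=1,\dots,k}$, and set $u^\alpha_a=L_{\xi_a}f^\alpha$, $F^I_\alpha=(\partial_\alpha F^I)\circ f$ and $F^I_{\alpha\beta}=(\partial_\alpha\partial_\beta F^I)\circ f$. First I would record the two chain-rule identities
\begin{align*}
L_{\xi_a}g^I &= \sum_\alpha F^I_\alpha\,u^\alpha_a,\\
\{L_{\xi_a},L_{\xi_b}\}g^I &= 2\sum_{\gamma,\beta}F^I_{\gamma\beta}\,u^\gamma_a u^\beta_b+\sum_\alpha F^I_\alpha\,\{L_{\xi_a},L_{\xi_b}\}f^\alpha,
\end{align*}
the second obtained by applying $L_{\xi_a}$ to the first, then symmetrizing in $a,b$; the cross terms combine into the factor $2$ because $F^I_{\gamma\beta}$ is symmetric in $\gamma,\beta$.

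Next I would test linear independence of the rows directly. Suppose $\sum_a\lambda^a L_{\xi_a}g+\sum_{a\le b}\mu^{ab}\{L_{\xi_a},L_{\xi_b}\}g=0$ in $\bR^{q'}$. Substituting the identities above and collecting the coefficients of the fixed vectors $F_\alpha=(F^I_\alpha)_I$ and $F_{\gamma\beta}=(F^I_{\gamma\beta})_I$, the hypothesis $F\in\Free(\Rq,\bR^{q'})$ says precisely that the $q+s_q$ vectors $\{F_\alpha\}_\alpha\cup\{F_{\gamma\beta}\}_{\gamma\le\beta}$ are linearly independent; hence every coefficient of this combination must vanish. Writing $U=(u^\alpha_a)=D_1(f)^{\top}$ for the $q\times k$ matrix and assembling $\mu$ into a symmetric $k\times k$ matrix $M$ (with $M=0\iff\mu=0$), the coefficients of the second-derivative vectors $F_{\gamma\beta}$ collapse to the single matrix equation $U M U^{\top}=0$, while the coefficients of the first-derivative vectors $F_\alpha$ give $U\lambda+(\text{a term linear in }\mu)=0$.

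The crux is then a short linear-algebra step. Since $f$ is an $\cH$-immersion, $D_1(f)$ is full-rank, so $U$ has full column rank $k$ and $U^{\top}U$ is invertible; multiplying $U M U^{\top}=0$ on the left by $U^{\top}$ and on the right by $U$ forces $M=0$, i.e.\ all $\mu^{ab}=0$. Feeding this back, the first-derivative relation collapses to $U\lambda=0$, whence $\lambda=0$ again by full column rank. Thus no nontrivial combination of the rows of $D_2(g)$ vanishes at any point, so $g\in\Free_\cH(M,\bR^{q'})$. I expect the only real obstacle to be organizational: keeping the symmetrizations straight so that the $\mu$-part of the second-order coefficients genuinely assembles into $U M U^{\top}$ with $M=0\iff\mu=0$. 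Once that packaging is correct, freeness of $F$ (which separates the two orders of derivatives) and full column rank of $U$ (which inverts each order) do all the work.

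For the final clause I would simply combine the theorem with the polynomial free map of the Example. If $\cH$ admits an $\cH$-immersion in critical dimension, i.e.\ some $f\in\Imm_\cH(M,\bR^k)$ with $q=k$, take $F=F_k\in\Free(\bR^k,\bR^{k+s_k})$; then the theorem gives $F\circ f\in\Free_\cH(M,\bR^{k+s_k})$, which is an $\cH$-free map in its critical dimension $k+s_k$.
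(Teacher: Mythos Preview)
Your proof is correct and starts from the same chain-rule identities as the paper, but the finishing step differs. The paper restricts (somewhat informally, ``without loss of generality'') to the critical-dimension case $q=k$, $q'=k+s_k$, packages the chain rule as a block factorization $D_2(F\circ f)=\left(\begin{smallmatrix}D_1(f)&0\\ C&D\end{smallmatrix}\right)D_2(F)$ with square blocks, and then observes that the $s_k\times s_k$ block $D$ is the image of $D_1(f)$ under the symmetric-square representation $\rho:GL_k(\bR)\to GL_{s_k}(\bR)$, yielding the explicit identity $\det D_2(F\circ f)=(\det D_1(f))^{k+2}\det D_2(F)$. You instead test linear independence of the rows directly, reducing to the two conditions $UMU^\top=0$ and $U\lambda=0$ and clearing them via full column rank of $U=D_1(f)^\top$. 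Your route works uniformly for all $q$ and $q'$ (so the ``in particular'' clause is genuinely a special case rather than the heart of the argument) and avoids the representation-theoretic identification; the paper's route, in exchange, produces a clean closed-form determinant formula in the square case.
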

\begin{proof}
  We can prove the claim without loss of generality in the critical dimension
  case, namely when $q=k$ and $q'=k+s_k$.
  Let $f=(f^a):M\to\bR^k$ a $\cH$-immersion and $F=(F^i):\bR^k\to\bR^{k+s_k}$
  a free map (see Example~\ref{ex:Free}). 

  Now observe that
  \begin{equation*}
    \begin{cases}
      L_{\xi_a}F^i(f^1,\dots,f^k) &= L_{\xi_a}f^c \, \partial_c F\cr
      L^2_{\xi_a}F^i(f^1,\dots,f^k) &= L_{\xi_a}f^c \, L_{\xi_a}f^d \, \partial^2_{cd} F + L^2_{\xi_a}f^c \, \partial_c F\cr
      \{L_{\xi_a},L_{\xi_b}\}F^i(f^1,\dots,f^k) &= 2 L_{\xi_a}f^c \, L_{\xi_b}f^d \, \partial^2_{cd} F + \{L_{\xi_a},L_{\xi_b}\}f^c \, \partial_c F\cr
    \end{cases}
  \end{equation*}
  This shows that 
%  the matrix $D$ of the first and second derivatives of $F\circ f$ satisfy the relation
  $$
  D_2(F\circ f) =
%  \bordermatrix{
%    &\overbrace{}^k&\overbrace{}^{s_k}\cr 
%    \hskip .03cm\;\rotatebox{90}{$\overbrace{}^{\rotatebox{-90}{$\scriptstyle k$}}$}&A&B\cr 
%    \rotatebox{90}{$\overbrace{}^{\rotatebox{-90}{$\scriptstyle s_k$}}$}&C&D\cr
%  }
%  \bordermatrix{&\overbrace{}^k&\cr \rotatebox{90}{\overbrace{}^k}&A&B\cr \lbrace&C&D\cr}
%  \begin{pmatrix}
  \left(
  \begin{array}{c|c}
%    A&B\cr
    D_1(f)&\mathbb O_{k,s_k}\cr
    \hline
    C&D\cr
  \end{array}
%  \begin{array}{ccc|cccc}
%    L_{\xi_1}f^1&\dots&L_{\xi_1}f^k&0\cr
%    \vdots&\vdots&\vdots&\vdots\cr
%    L_{\xi_k}f^1&\dots&L_{\xi_k}f^k&0\cr
%    \hline
%    L^2_{\xi_1}f^1&\dots&L^2_{\xi_1}f^k&(L_{\xi_1}f^1)^2&2L_{\xi_1}f^1 L_{\xi_1}f^2&\vdots&(L_{\xi_1}f^k)^2\cr
%    \{L_{\xi_1},L_{\xi_2}\}f^1&\dots&\{L_{\xi_1},L_{\xi_2}\}f^k&L_{\xi_1}f^1 L_{\xi_2}f^1&L_{\xi_1}f^1 L_{\xi_2}f^1+L_{\xi_2}f^1 L_{\xi_1}f^1&\vdots&L_{\xi_1}f^k L_{\xi_2}f^k\cr
%    \vdots&\vdots&\vdots&\vdots\cr
%    L_{\xi_k}f^1&\dots&L_{\xi_k}f^k&(L_{\xi_k}f^1)^2&2L_{\xi_1}f^1 L_{\xi_1}f^2&\vdots&(L_{\xi_1}f^k)^2\cr
%  \end{array}
  \right)
  D_2(F)
%  \end{pmatrix}
  $$
  where %$A=D_1(f)$, $B=\mathbb O_{k,s_k}$,
  $$
  C=
  \left(
  \begin{array}{ccc}
    L^2_{\xi_1}f^1&\dots&L^2_{\xi_1}f^k\cr
    \{L_{\xi_1},L_{\xi_2}\}f^1&\dots&\{L_{\xi_1},L_{\xi_2}\}f^k\cr
    \vdots&\vdots&\vdots\cr
    L_{\xi_k}f^1&\dots&L_{\xi_k}f^k\cr
  \end{array}
  \right)
  $$
  and
  $$
  D = 
  \left(
  \begin{array}{cccc}
    (L_{\xi_1}f^1)^2&2L_{\xi_1}f^1 L_{\xi_1}f^2&\dots&(L_{\xi_1}f^k)^2\cr
    L_{\xi_1}f^1 L_{\xi_2}f^1&L_{\xi_1}f^1 L_{\xi_2}f^1+L_{\xi_2}f^1 L_{\xi_1}f^1&\dots&L_{\xi_1}f^k L_{\xi_2}f^k\cr
    \vdots&\vdots&\vdots&\vdots\cr
    (L_{\xi_k}f^1)^2&2L_{\xi_1}f^1 L_{\xi_1}f^2&\dots&(L_{\xi_1}f^k)^2\cr
  \end{array}
  \right)\,.
  $$
  Clearly then $\det D_2(F\circ f)=\det D_1(f)\det D\det D_2 F$. It is easy to check that
  the matrix $D$ can be written as $\rho(D_1(f))$, where 
  $\rho:GL_{k}(\bR)\to GL_{s_k}(\bR)$ is a linear representation of $GL_{k}(\bR)$ over $\bR^{s_k}$,
  and therefore $\det D_1(f)\neq0$ implies $\det D\neq0$. In particular it is easy to check
  (e.g. it is enough to consider the case of diagonal matrices) that $\det D=(\det D_1(f))^{k+1}$,
  so that 
  $$
  \det D_2(F\circ f)=(\det D_1(f))^{k+2}\det D_2 F\,.
  $$
  Hence if $f$ is a $\cH$-immersion and $F$ is free, the map $F\circ f$ is $\cH$-free.
\end{proof}
\section{$\cH$-immersions in critical dimension}
Thanks to Theorem~\ref{thm:main}, we can now reformulate Theorems 1.1-1.3 of~\cite{DDL10} 
so that it is clear that they all depend on the existence of a $\cH$-immersion.
% in a the following way:

Consider first the case of 1-distributions $\cH$ in the plane (see Section 3.1 in~\cite{DDL10}). 
Kaplan proved~\cite{Kap40} that 
all 1-distributions in the plane are orientable, so that there exists a vector field everywhere
non-zero such that $\cH=\span \xi$. We say that $\cH$ is {\em Hamiltonian} when it is tangent
to the level sets of a regular\footnote{We say that a smooth function is {\em regular} when $df\neq0$
at every point. Analogously, we say that a vector field is regular when it has no zeros.} 
function $f$, i.e. $\cH=\ker df$. % and $df\neq0$ at every point. 
Let now $\cF$ be the integral foliation of $\cH$. Two leaves are said {\em separatrices} when
they cannot be separated in the quotient topology on $\cF$. We say that $\cH$ is of 
{\em finite type} when the set of the separatrices of $\cF$ is closed and every separatrix is
inseparable from just a finite number of other leaves; for example, if $\cH$ is the span
of a polynomial vector field then it is of finite type~\cite{Mar72}.
\begin{theorem}
  Let $\cH$ be a planar 1-distribution which is either Hamiltonian or of finite type.
  Then $\cH$ admits a $\cH$-immersion in critical dimension. %$f:\Rt\to\bR$.
\end{theorem}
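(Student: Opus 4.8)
The plan is to unwind the definition of a $\cH$-immersion in critical dimension and then build the required map separately in the two cases, using in each the special structure of the foliation that the hypothesis provides. First I would reduce the statement to a single scalar inequality. By Kaplan's orientability theorem (already invoked in the text) there is a nowhere-zero $\xi$ with $\cH=\span\{\xi\}$, and any local trivialization of a $1$-distribution is a nonzero rescaling of $\xi$; since $D_1(f)$ is here the $1\times1$ matrix $(L_\xi f)$, a $\cH$-immersion in the critical dimension $q=k=1$ is exactly a function $f\in C^\infty(\Rt,\bR)$ with $L_\xi f\neq0$ at every point. Geometrically this says that $f$ is strictly monotone along every leaf of the integral foliation $\cF$ of $\cH$. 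I would record at once the one obvious obstruction: a compact leaf would force $L_\xi f$ to vanish somewhere on it; but $\xi$ is nowhere zero, and by the Poincar\'e index theorem a closed leaf would have to surround a zero of $\xi$, so $\cF$ has no closed leaf and every leaf is a properly embedded line. Thus the target is a global smooth $f$ that increases along each of these lines, equivalently an exact $1$-form $df$ lying in the open convex cone $\{\omega:\omega(\xi)>0\}$.

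For the Hamiltonian case, where $\cH=\ker dh$ for a regular $h$, I would rephrase the goal as completing $h$ to a local diffeomorphism: since $L_\xi f\neq0$ is equivalent to $dh\wedge df\neq0$, it suffices to find $f$ with $(h,f)$ a local diffeomorphism of $\Rt$. Here the leaves are the connected components of the level sets of $h$, and a regular function on the plane has no compact level component (on the disk bounded by such a component $h$ would attain an interior extremum, contradicting $dh\neq0$). I would then produce the transverse coordinate $f$ from the flow of the gradient-like field $\nabla h/\|\nabla h\|^2$ together with a global transversal to $\cF$; the absence of compact leaves removes the only periodicity obstruction, and Kaplan's parallelizability of regular curve-families filling the plane supplies the global transverse coordinate, which after smoothing gives the desired $f$.

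For the finite-type case I would analyze the leaf space $Q=\Rt/\cF$, a simply connected, in general non-Hausdorff, $1$-manifold whose branch points correspond exactly to the separatrices. The finite-type hypothesis says these branch points form a closed set and each is inseparable from finitely many leaves, so $Q$ is a tame non-Hausdorff $1$-manifold and $\cF$ has no recurrent leaves and no nontrivial foliation cycles. I would then build $f$ as a monotone fiber coordinate along the leaves: away from the finitely many separatrices this is a routine flow-box-and-glue construction, and near each separatrix the finiteness of the inseparable leaves lets me match the finitely many incoming branches by hand. Equivalently, the vanishing of foliation cycles yields a closed---hence, on the simply connected $\Rt$, exact---$1$-form positive on $\xi$, whose primitive is the required $f$.

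The main obstacle in both cases is globalization. Locally $L_\xi f>0$ is trivially solvable (take any coordinate transverse to the leaves), but a partition-of-unity assembly $f=\sum_\alpha\rho_\alpha f_\alpha$ fails at the first-order level: the cross terms $f_\alpha\,L_\xi\rho_\alpha$ can swamp the good terms $\rho_\alpha\,L_\xi f_\alpha$, so positivity is not convex in $f$ itself. What is convex is the condition on the $1$-form $df$, and the genuine content is therefore to realize a form of the positive cone by an exact one; the obstruction to this is precisely a nonzero foliation cycle, and the whole role of the Hamiltonian regularity and of the finite-type hypothesis is to kill such cycles (the former by making every leaf a non-compact level component, the latter by taming the separatrices). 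I expect the delicate bookkeeping to be concentrated entirely at the separatrices, that is, at the non-Hausdorff points of $Q$.
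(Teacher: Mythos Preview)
Your reduction to the single scalar inequality $L_\xi f\neq 0$ is correct and coincides with the paper's first sentence. After that point the two proofs diverge entirely: the paper does not argue at all, but simply \emph{cites} two external results---Weiner's Lemma in~\cite{Wei88} for the Hamiltonian case and Lemma~3.1 of~\cite{DDL10} for the finite-type case---and stops. What you have written is, in effect, an outline for reproving those cited lemmas from scratch. That is a legitimate and more self-contained route, and your identification of the essential difficulty (local solutions exist trivially but do not patch by partitions of unity; the real content is realizing an element of the open convex cone $\{\omega:\omega(\xi)>0\}$ by an \emph{exact} form, the obstruction being a nontrivial foliation cycle) is exactly the right diagnosis.

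That said, your finite-type sketch contains a concrete slip. You write ``away from the finitely many separatrices,'' but the definition of finite type does \emph{not} say the separatrix set is finite---only that it is closed and that each individual separatrix is inseparable from finitely many others. A polynomial vector field can have infinitely many separatrices. Consequently your proposed ``routine flow-box-and-glue construction'' off a finite set, followed by a finite hand-matching, is not available as stated; the genuine argument must organize an infinite (though tame) family of non-Hausdorff points in the leaf space, and this is precisely where the work in Lemma~3.1 of~\cite{DDL10} sits. Your Hamiltonian sketch is closer in spirit to Weiner's, but note that Kaplan's parallelization of a regular curve family is only topological, so the phrase ``after smoothing gives the desired $f$'' hides a nontrivial step that also requires justification.
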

\begin{proof}
  A $\cH$-immersion $f:\Rt\to\bR$ is a function $f$ such that either $L_\xi f>0$ or $L_\xi f<0$
  at every point. When $\cH$ is Hamiltonian, the existence of such a function was
  proved by Weiner in its Lemma in~\cite{Wei88}. When $\cH$ is of finite type, it
  was proved in Lemma 3.1 of~\cite{DDL10}.
\end{proof}
\begin{example}
  \label{ex:iH}
  Consider the distribution $\cH_\xi=\span\xi\subset T\Rt$, with 
  $\xi=2y\partial_x + (1-y^2)\partial_y$. Since the components of $\xi$
  depend only on $y$, only vertical straight lines can be separatrices
  for $\cH_\xi$; in particular only $y=\pm1$ are separatrix leaves for $\cH_\xi$. 
%has exactly two  separatrices, the straight lines $y=\pm1$.
%  regular planar vector field $\xi=2y\partial_x + (1-y^2)\partial_y$. 
  A direct calculation shows that $\ker L_\xi$ is functionally generated by the regular 
  smooth function $f(x,y)=(1-y^2)e^x$, namely $\cH_\xi=\ker df$ is a Hamiltonian
  distribution. Now let $g(x,y)=ye^x$.
  It is easy to check that $L_\xi g(x,y) = (1+y^2)e^x>0$, so that 
%  $g:\Rt\to\bR$ is a $\cH$-immersion and $F=(g,g^2):\Rt\to\Rt$ is $\cH_\xi$-free.
  $g\in\Imm_{\cH_\xi}(\Rt,\bR)$ and $F=(g,g^2)\in\Free_{\cH_\xi}(\Rt,\Rt)$.

  Consider now the distribution $\cH_\eta=\span\eta$, with 
  $\eta=(3y-1)\partial_x+ (1-y^2)\partial_y$. Considerations similar to the
  ones made above show that $y=\pm1$ are the only separatrices 
  for $\cH_\eta$. A functional generator for $\ker L_\eta$ is given by 
  $f'(x,y)=(1-y)(1+y)^2e^x$, whose gradient is null on the separatrix $y=-1$,
  so that $\cH_\eta$ is not Hamiltonian; nevertheless $\cH_\eta$ is of finite type 
  since $\eta$ is polynomial. A direct calculation shows that
  $L_\eta g(x,y)=(2y^2-y+1)e^x>0$, so that $g\in\Imm_{\cH_\eta}(\Rt,\bR)$ 
  and $F=(g,g^2)\in\Free_{\cH_\eta}(\Rt,\Rt)$.
\end{example}
Consider now the case of completely integral systems (Section 3.2 in~\cite{DDL10}). 
Recall that, on a $2n$-dimensional symplectic manifold $(M,\omega)$, a completely integrable 
system is a collection of $n$ functions $\{h_1,\dots,h_n\}$ in involution, i.e. such that
the Poisson bracket of every pair of $h_i$ is identically zero. 
%Let $\cH=\cap_{i=1}^n\ker dI_i$
%(note that in this case $\cH$ is Lagrangian, i.e. at every point $x$ the $n$-plane $\cH_x$ 
%is a maximal linear subspace of $T_xM$ on which the restriction of $\omega$ is identically zero).
%
\begin{theorem}
  Let $\cH=\cap_{i=1}^n\ker dI_i$ be the Lagrangian $n$-distribution of a completely integrable system 
  $\{h_1,\dots,h_n\}$ on a symplectic manifold $(M^{2n},\omega)$ such that:
  \begin{enumerate}
    \item the Hamiltonian vector fields $\xi_i$ of the $h_i$ are all complete;
    \item the (Lagrangian) leaves of $\cH$ are all diffeomorphic to $\bR^n$.
  \end{enumerate}
  Then $\cH$ admits a $\cH$-immersion $f:M^{2n}\to\bR^n$.
\end{theorem}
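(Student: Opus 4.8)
The plan is to produce $f=(f^1,\dots,f^n):M^{2n}\to\bR^n$ for which $D_1(f)=\left(L_{\xi_a}f^i\right)$ is invertible at every point; concretely I will arrange $L_{\xi_a}f^i=\delta^i_a$, so that $D_1(f)$ is the identity and $f\in\Imm_\cH(M,\bR^n)$ in the critical dimension $q=n$. The underlying idea is a non-compact Liouville--Arnold construction: I build the ``time'' functions conjugate to the integrals. Since the $h_i$ are in involution their Hamiltonian fields commute, $[\xi_i,\xi_j]=0$, and by hypothesis (1) the joint flow assembles into a smooth action $\Phi:\bR^n\times M\to M$. As the $\xi_i$ span $\cH$, which is tangent to the connected leaves of $\cF$, each orbit is open in its leaf and hence equals it; and because by hypothesis (2) every leaf is diffeomorphic to $\bR^n$, the stabilizers are trivial (a nontrivial closed subgroup of $\bR^n$ would force either a drop in the orbit dimension or a compact $\pi_1$, neither compatible with an orbit diffeomorphic to $\bR^n$). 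Thus $\Phi$ is free and each leaf is an $\bR^n$-torsor.

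Granting a smooth global section $\sigma$ of the leaf fibration $M\to B:=M/\cF$, the rest is immediate. By freeness and transitivity on leaves, every $p\in M$ is written uniquely as $p=\Phi\bigl(t(p),\sigma(b(p))\bigr)$, where $b(p)$ is the leaf through $p$ and $t(p)\in\bR^n$; set $f(p)=t(p)$. Then $f\bigl(\Phi(u,p)\bigr)=u+f(p)$ for all $u\in\bR^n$, so differentiating along $\xi_a$ yields $L_{\xi_a}f^i=\delta^i_a$ and $D_1(f)$ equals the identity, as wanted. To produce $\sigma$ I would first observe that local sections exist everywhere, since the $\xi_a$ are pointwise independent and a simultaneous flow-box provides a leaf chart; I would then patch local sections $\sigma_\alpha$ by a partition of unity $\{\rho_\alpha\}$ pulled back from $B$, forming $\sigma=\sum_\alpha\rho_\alpha\,\sigma_\alpha$ read as an affine combination inside each $\bR^n$-torsor—this is well defined precisely because the fibers are affine over $\bR^n$ and $\sum_\alpha\rho_\alpha\equiv1$.

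The step I expect to be the main obstacle is exactly this last one: guaranteeing that the leaf space $B$ is a Hausdorff, paracompact manifold, so that the partition of unity (hence the section) is available. This is where the two hypotheses and the integrable structure must be combined. The integrals $I=(h_1,\dots,h_n)$ are constant along the flows, $L_{\xi_a}h_i=\{h_a,h_i\}=0$, so they descend to $B$ and separate and organize the leaves, while hypothesis (2) forces the fibers to be contractible and annihilates the holonomy. The point to verify is that $\Phi$ is \emph{proper}: its orbits are closed (they are components of the level sets $I^{-1}(c)$) and are embedded copies of $\bR^n$, and for a free $\bR^n$-action closedness of all orbits, together with the local triviality furnished by the flow-boxes, is what I would leverage to conclude properness. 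Then $M\to B$ is a principal $\bR^n$-bundle, trivial because its structure group $\bR^n$ is contractible, and a global section $\sigma$ exists. Once $\sigma$ is in hand one has $f\in\Imm_\cH(M,\bR^n)$ in critical dimension, and composing $f$ with a free polynomial map $F_n:\bR^n\to\bR^{n+s_n}$ as in Theorem~\ref{thm:main} upgrades it to a $\cH$-free map in critical dimension.
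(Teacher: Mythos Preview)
Your strategy aims for $L_{\xi_a}f^i=\delta^i_a$, which is strictly stronger than what the theorem requires, and the step you yourself flag as the main obstacle is in fact a genuine gap: under the stated hypotheses the $\bR^n$-action need \emph{not} be proper and the leaf space need not be Hausdorff, so the global section you want may fail to exist. A concrete counterexample for $n=1$ is $M=\bR^2\setminus\{0\}$ with symplectic form $\omega=r^{-1}\,dx\wedge dy$, $r=\sqrt{x^2+y^2}$, and $h(x,y)=x$. Then $\xi_h=-r\,\partial_y$; this field is complete (on each line $x=a\neq0$ the flow is $y(t)=|a|\sinh(C-t)$, and on each half of the $y$-axis it is $y(t)=y_0e^{\mp t}$), and every leaf---the full vertical lines for $x\neq0$ and the two open half-axes for $x=0$---is a closed submanifold diffeomorphic to $\bR$. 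Yet the leaf space is the real line with a doubled origin, so it is not Hausdorff and no global section exists. Correspondingly, no smooth $f$ on $M$ satisfies $L_{\xi_h}f\equiv1$: along the line $x=a\neq0$ one is forced to $f(a,y)=-\operatorname{arcsinh}(y/|a|)+g(a)$, and matching $f(a,1)$ to a finite limit as $a\to0$ forces $g(a)\to+\infty$, while matching $f(a,-1)$ forces $g(a)\to-\infty$. Thus ``closed orbits plus flow-box local triviality'' does \emph{not} imply properness, and your construction cannot be completed.

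The paper's proof avoids this by not insisting on $D_1(f)=\mathrm{id}$. It invokes Lemma~3.3 of~\cite{DDL10}, which under the same hypotheses produces functions $f^1,\dots,f^n$ with $\{h_i,f^j\}=L_{\xi_i}f^j=0$ for $i\neq j$ and $\{h_i,f^i\}>0$; then $D_1(f)$ is diagonal with strictly positive entries, hence invertible, and $f\in\Imm_\cH(M,\bR^n)$. In the counterexample above this is simply $f(x,y)=-y$, giving $L_{\xi_h}f=r>0$. The point is that one only needs each $f^i$ to be \emph{monotone} along the $\xi_i$-flow and constant along the others, not to be a global time coordinate; the latter would require exactly the principal $\bR^n$-bundle structure that the hypotheses do not guarantee.
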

\begin{proof}
  In Lemma 3.3 of~\cite{DDL10} we proved the existence, under the same assumptions
  of this theorem, of $n$ functions $f^i$ such that $\{h_i,f^j\}=0$ for $i\neq j$ 
  and $\{h_i,f^i\}>0$, $i=1,\dots,n$. Since $\cH$ is spanned by the Hamiltonian 
  pairwise commuting vector fields $\xi_i$ and $L_{\xi_i}f^j=\{h_i,f^j\}$, this is enough to grant
  that the map $f=(f^1,\dots,f^n):M\to\bR^n$ is a $\cH$-immersion.
%  The existence of this $\cH$-immersion is a corollary of Lemma 3.3 of~\cite{DDL10}.
\end{proof}
\begin{example}
  Consider the symplectic manifold $T^*\bT^n$ with canonical coordinates 
  $(\varphi^\alpha,p_\alpha)$, so that the symplectic form is equal to 
  $\omega=d\varphi^\alpha\wedge dp_\alpha$.
  The system $\{I_\alpha=e^{p_\alpha}\cos\varphi^\alpha\}_{\alpha=1,\dots,n}$ is  
  completely integrable on $T^*\bT^n$, e.g. because each $I_\alpha$ depends
  only on the two coordinates with index $\alpha$.
  
  The corresponding Lagrangian distribution $\cH=\cap_{\alpha=1}^n\ker dI_\alpha$ 
  is generated by the pairwise commuting (Hamiltonian) vector fields 
  $\xi_\alpha=e^{p_\alpha}(\sin\varphi^\alpha,\cos\varphi^\alpha)$.
  This system is clearly the direct product of $n$ independent systems on
  the cilinders $(\varphi^\alpha,p_\alpha)$, $\alpha=1,\dots,n$, in such a way 
  that the $\alpha$-th system admits, as partial immersion, the function
  $g_\alpha=e^{p_\alpha}\sin\varphi^\alpha$; indeed $L_{\xi_\alpha}g_\alpha=e^{2p_\alpha}>0$. 
  Hence the map 
  $$
  G=(g_1,\dots,g_n):T^*\bT^n\to\bR^n
  $$ 
  is a $\cH$-immersion and, consequently, the map 
  $$
  F=(g_1,\dots,g_n,g_1^2,g_1g_2,\dots,g_n^2):T^*\bT^n\to\bR^{n+s_n}
  $$ 
  is a $\cH$-free map.
%, each of which admits as partial immersion the function
\end{example}
Finally, consider the case of Riemann-Poisson manifolds. These are Riemannian
manifolds $(M,g)$ on which it is defined the Poisson structure
$$
\{f,g\}_H\stackrel{\rm def}{=}*[dh_1\wedge\cdots\wedge dh_{m-2}\wedge df\wedge dg]\,,
$$
where the $H=\{h_1,\cdots,h_{m-2}\}$ are fixed smooth (possibly multivalued) functions 
on $M$.
\begin{example}
  Consider the flat torus $\bT^3$ with angular coordinates $(\theta^1,\theta^2,\theta^3)$ and 
  $H=\{h(\theta^i)=B_i\theta^i\}$ for some constant 1-form $B=B_i d\theta^i$.
  Then the Riemann-Poisson bracket is given by 
  $$
  \{f,g\}_H = \epsilon^{ijk}\partial_i f\,\partial_i g\, B_k\;,
  $$
  where $\epsilon^{ijk}$ is the totally antisymmetric Levi--Civita tensor. 
  This bracket was introduced by S.P.~Novikov as an application of his 
  generalization of Morse theory to multivalued functions~\cite{Nov82}. 
  An example of the rich topological structure hidden in this Riemann-Poisson 
  bracket can be found in~\cite{DD09}.
\end{example}
\begin{theorem}
  Let $(M,g,\{,\}_H)$ a Riemann-Poisson manifold such that the $m-2$ functions 
  in $H$ are functionally independent at every point and let $\cH$ be a 
  Hamiltonian $1$-distribution on it. 
  Then $\cH$ admits a $\cH$-immersion $f:M\to\bR$.
\end{theorem}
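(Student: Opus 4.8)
The plan is to reduce the statement to the planar Hamiltonian case already settled by the first theorem of this section. First I would unwind the definitions: since $\cH$ is Hamiltonian it is spanned by the Hamiltonian vector field $\xi=X_h$ of a regular function $h$, where $X_h(f)=\{h,f\}_H$. Writing out the bracket gives
\[
L_\xi f=\{h,f\}_H=*\bigl[dh_1\wedge\cdots\wedge dh_{m-2}\wedge dh\wedge df\bigr],
\]
so a $\cH$-immersion $f:M\to\bR$ is precisely a function for which the $m$-form $dh_1\wedge\cdots\wedge dh_{m-2}\wedge dh\wedge df$ is everywhere a (say positive) multiple of the Riemannian volume form. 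In particular $X_h$ annihilates each $h_i$ and $h$, so $\xi$ is tangent to all their level sets; the line field $\cH$ coincides with $\ker dh_1\cap\cdots\cap\ker dh_{m-2}\cap\ker dh$ and is therefore independent of the metric. Thus the problem is purely one of completing the everywhere-independent coframe $(dh_1,\dots,dh_{m-2},dh)$, which is nonvanishing by functional independence, to a positively oriented one by an exact $1$-form $df$.

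Next I would foliate $M$ by the $2$-dimensional symplectic leaves $L_c=\{h_1=c_1,\dots,h_{m-2}=c_{m-2}\}$, which are regular submanifolds under the hypothesis of functional independence. The bracket $\{\,,\}_H$ restricts to the area form of $L_c$, the field $\xi|_{L_c}$ is the symplectic gradient of $h|_{L_c}$, and $\cH|_{L_c}=\ker d(h|_{L_c})$ is a planar Hamiltonian $1$-distribution. On each leaf the first theorem of this section (equivalently Weiner's lemma, \cite{Wei88}) produces a function with $L_\xi(\cdot)>0$, the point being that the level curves of a regular function on a surface carry no closed orbits once the leaf is non-compact, so a function monotone across the foliation exists. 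Carrying out this construction over a local trivialization of the fibration $(h_1,\dots,h_{m-2},h)$ then yields, over each open set $U_\beta$ of the leaf space, a smooth local solution $f_\beta$ with $L_\xi f_\beta>0$.

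The main obstacle, as usual, is globalization: I must assemble the local solutions into a single smooth $f$ on $M$ with $L_\xi f>0$ everywhere. The plan is to exploit that positivity of $L_\xi f$ is a convex condition and to patch with a partition of unity $\{\rho_\beta\}$ taken in the transverse variables alone, i.e.\ with $\rho_\beta=\rho_\beta(h_1,\dots,h_{m-2},h)$ so that $L_\xi\rho_\beta=0$; then $L_\xi(\sum_\beta\rho_\beta f_\beta)=\sum_\beta\rho_\beta\,L_\xi f_\beta>0$, the troublesome cross terms $\sum_\beta (L_\xi\rho_\beta)f_\beta$ vanishing identically. The delicate issue is therefore entirely whether the space of leaves is well behaved enough to admit such a transverse partition of unity: separatrix leaves, i.e.\ inseparable points of the leaf space, can obstruct it, exactly as in the finite-type planar case, and it is here that one must invoke the Hamiltonian hypothesis together with regularity of $h$ to rule out compact leaves and closed orbits of $\xi$. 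This is precisely the content of the corresponding lemma of \cite{DDL10}, which I would cite to conclude.
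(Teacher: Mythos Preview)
Your proposal lands exactly where the paper does: after identifying a $\cH$-immersion as a function $f$ with $\{h,f\}_H>0$, you invoke the corresponding lemma of \cite{DDL10}, and that is literally the entirety of the paper's proof (it cites Lemma~3.4 of \cite{DDL10} and nothing more). Your additional exposition---restricting to the two-dimensional symplectic leaves, applying the planar Hamiltonian case leafwise, and patching via a partition of unity pulled back through $(h_1,\dots,h_{m-2},h)$ so that $L_\xi\rho_\beta=0$---goes beyond what the present paper supplies and is in effect a sketch of the cited lemma rather than an alternative route.
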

\begin{proof}
%  Let $\xi$ be a vector field such that $\cH=\span\xi$.
  Let $h$ a Hamiltonian for $\cH$.
  A $\cH$-immersion $f:M\to\bR$ is a function $f$ such that $\{h,f\}_H>0$ 
  (or $\{h,f\}_H<0$). The existence of such a function was proven in Lemma
  3.4 of~\cite{DDL10}.
\end{proof}
\begin{example}
  Consider the case of $\bE^3$ with the Riemann-Poisson structure induced
  by the singlet $H=\{(1-y^2)e^x\}$, so that
%where by the pair $H=\{h_1=(1-y^2)e^x,h_2=\lambda(x,y)z+\mu(x,y)\}$, where 
%  $\lambda$ is a positive function and $\mu$ is arbitrary. Then
  $$
  \{f,g\}_H = e^x\left[
  (1-y^2)\left(\partial_y f\partial_z g-\partial_z f\partial_y g\right)
  -2y\left(\partial_x f\partial_z g-\partial_z f\partial_x g\right)
  \right]\,.
  $$
  Take a Hamiltonian of the form $h(x,y,z)=\lambda(x,y)z+\mu(x,y)$,
  where $\lambda$ is strictly positive and $\mu$ is arbitrary.
  The Hamiltonian 1-dimensional distribution $\cH$ corresponding 
  to $h$ is the span of the regular vector field $\xi_h=\{h,\cdot\}_H$
  which, for our particular choice of $h$, writes as
  $$
  \xi_h = e^x\left[
%  \xi = \{h,\cdot\}_H = e^x\left[
%  (1-y^2)\left((z\partial_y \lambda +\partial_y \mu)\partial_z - \lambda\partial_y \right)
%  -2y\left((z\partial_x \lambda + \partial_x \mu )\partial_z - \lambda\partial_x \right)
  (1-2y-y^2)(z\partial_y \lambda(x,y) +\partial_y \mu(x,y))\partial_z - 
    \lambda(x,y)\left((1-y^2)\partial_y-2y \partial_x\right)
%  -2y\left((z\partial_x \lambda + \partial_x \mu )\partial_z - \lambda\partial_x \right)
  \right].
  $$
  Example~\ref{ex:iH} shows that $f(x,y,z)=ye^x$ solves the partial differential 
  inequality $\{h,f\}_H>0$. Indeed
% it is easy to verify that $f(x,y,z)=ye^x$ solves $\{h,f\}_H>0$.
%  Indeed 
  $$
  \{h,f\}_H = L_{\xi_h} f = e^x\lambda(x,y)\left[(1-y^2) e^x + 2y^2e^x\right] = (1+y^2)\lambda(x,y) e^{2x}>0\,,
  $$
  so that $f\in\Imm_\cH(\bR^3,\bR)$ and $(f,f^2)\in\Free_\cH(\bR^3,\bR)$.
\end{example}
We add now a fourth case where it is possible to find a $\cH$-immersion
in critical dimension. Recall that a contact structure on a 
$(2n+1)$-dimensional manifold $M$ is a completely non-integrable codimension-1
distribution $\cH\subset TM$. Locally $\cH=\ker\theta$ for some 1-form
$\theta$, so that the non-integrability condition translates into 
$\theta\wedge(d\theta)^n\neq0$. 
\begin{example} 
  Consider the bundle $J^1(N,\bR)\simeq T^*N\times\bR$ of all 1-jets of maps $N\to\bR$,
  where $N$ is a $n$-dimensional manifold.
 % For every $n$-dimensional manifold $N$, the bundle $J^1(N,\bR)\simeq T^*N\times\bR$ 
 % of 1-jets of maps $N\to\bR$ 
  This bundle has a canonical contact structure induced 
  by the tautological 1-form $\theta$, defined as the unique (modulo strictly positive 
  or negative smooth functions) 1-form
  such that a section $\sigma:N\to J^1(N,\bR)$ is holonomic (namely is the 1-jet of a 
  map $M\to\bR$) iff $\sigma^*\theta=0$.
%  Coordinates $(x^\alpha)$ induce on $J^1(N,\bR)$ 
  In canonical coordinates 
  $(x^\alpha,p_\alpha,t)$ a canonical contact form writes as
%  and in these coordinate we have that
  $
  \theta = \lambda(x,p,t)\left(dt - p_\alpha dx^\alpha\right),
  $
  where $\lambda(x,p,t)$ is never zero.
  For $N=\bR^n$ and this gives exactly the canonical contact structure on 
  $\bR^{2n+1}\simeq J^1(\bR^n,\bR)$.
\end{example}
\begin{theorem}
  Let $\cH$ be the canonical contact structure on $\bR^{2n+1}$.
  Then $\cH$ admits a $\cH$-immersion in critical dimension.
\end{theorem}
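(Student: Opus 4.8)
The plan is to exhibit a concrete $\cH$-immersion $f:\bR^{2n+1}\to\bR^{2n}$, since here $\dim\cH=2n$ and hence the critical dimension is $k=2n$. First I would fix the global contact form $\theta=dt-p_\alpha dx^\alpha$, taking $\lambda\equiv1$, which is harmless because rescaling $\theta$ by a nonvanishing function leaves $\cH=\ker\theta$ unchanged. Solving $\theta(v)=0$ then produces an explicit global trivialization of $\cH$ by the $2n$ everywhere-independent vector fields
$$
X_\alpha=\partial_{x^\alpha}+p_\alpha\partial_t,\qquad P_\alpha=\partial_{p_\alpha},\qquad \alpha=1,\dots,n,
$$
so that $\cH=\span\{X_1,\dots,X_n,P_1,\dots,P_n\}$.

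Next I would simply take $f=(x^1,\dots,x^n,p_1,\dots,p_n)$, that is, the projection $\bR^{2n+1}\to\bR^{2n}$ forgetting the $t$ coordinate, and evaluate $D_1(f)$ in the above frame. Since $L_{X_\alpha}x^\beta=\delta_\alpha^\beta$, $L_{X_\alpha}p_\beta=0$, $L_{P_\alpha}x^\beta=0$ and $L_{P_\alpha}p_\beta=\delta_{\alpha\beta}$, the matrix $D_1(f)$ equals the $2n\times2n$ identity at every point, hence is full-rank. Therefore $f\in\Imm_\cH(\bR^{2n+1},\bR^{2n})$ with $q=2n=k$, which is exactly the asserted $\cH$-immersion in critical dimension.

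The only point that genuinely needs care is the clause ``for every local trivialization'': I would remark that full-rankness of $D_1$ is trivialization-independent, since changing the local frame multiplies $D_1(f)$ on the left by an invertible matrix, so that verifying the condition for the single global frame $\{X_\alpha,P_\alpha\}$ already suffices. This is where I expect the main (admittedly mild) obstacle to lie: one must be sure that the complete non-integrability of $\cH$ plays no role here, as non-integrability obstructs integral submanifolds rather than partial immersions. Finally, to close the loop with the rest of the paper I would invoke Theorem~\ref{thm:main} with the polynomial free map $F_{2n}\in\Free(\bR^{2n},\bR^{2n+s_{2n}})$: the composition $F_{2n}\circ f$ is then automatically an $\cH$-free map in critical dimension $2n+s_{2n}$.
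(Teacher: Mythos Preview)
Your proposal is correct and follows essentially the same route as the paper: exhibit the global frame for $\cH$ coming from $\theta=dt-p_\alpha dx^\alpha$, take the projection onto the $(x,p)$ coordinates as the candidate $\cH$-immersion, and then compose with $F_{2n}$ to obtain the $\cH$-free map. Your write-up is in fact slightly more careful---you compute $D_1(f)$ explicitly and note the frame-independence of the full-rank condition---and your sign $X_\alpha=\partial_{x^\alpha}+p_\alpha\partial_t$ is the correct one for $\ker(dt-p_\alpha dx^\alpha)$, whereas the paper's $\partial_{x^\alpha}-p_\alpha\partial_t$ is a typographical slip.
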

\begin{proof}
  A trivialization for $\cH$ is given by the $2n$ vectors
  $$
  \xi_{1}=\partial_{x^1} - p_1\partial_t\,,\;\dots\,,\;\xi_{n}=\partial_{x^n} - p_n\partial_t\,,
  \xi_{n+1}=\partial_{p_1}\,,\;\dots\,,\;\xi_{2n}=\partial_{p_n}\,.
  $$
  Hence the projection on the first $2n$ components 
  $$
  \pi(x^1,p_1,\dots,x^n,p_n,z) = (x^1,p_1,\dots,x^n,p_n)
  $$ 
  belongs to $\Imm_\cH(\bR^{2n+1},\bR^{2n})$ and $F_{2n}\circ \pi$ belongs to
  $\Free_\cH(\bR^{2n+1},\bR^{2n+s_{2n}})$.
\end{proof}
\bibliography{refs}
\end{document}